\theoremstyle{plain}
\newtheorem{thm}{\protect\theoremname}
  \theoremstyle{definition}
  \newtheorem{defn}[thm]{\protect\definitionname}
  \theoremstyle{remark}
  \newtheorem{rem}[thm]{\protect\remarkname}
  \theoremstyle{plain}
  \newtheorem{cor}[thm]{\protect\corollaryname}
  \theoremstyle{plain}
  \newtheorem*{conjecture*}{\protect\conjecturename}
\let\mathcal=\CMcal
\DeclareMathOperator{\VN}{VN}
  \providecommand{\conjecturename}{Conjecture}
  \providecommand{\corollaryname}{Corollary}
  \providecommand{\definitionname}{Definition}
  \providecommand{\remarkname}{Remark}
\providecommand{\theoremname}{Theorem}
\begin{document}
\global\long\def\e{\varepsilon}
\global\long\def\N{\mathbb{N}}
\global\long\def\Z{\mathbb{Z}}
\global\long\def\Q{\mathbb{Q}}
\global\long\def\R{\mathbb{R}}
\global\long\def\C{\mathbb{C}}
\global\long\def\norm#1{\left\Vert #1\right\Vert }

\global\long\def\H{\EuScript H}
\global\long\def\a{\alpha}
\global\long\def\be{\beta}

\global\long\def\tensor{\otimes}

\global\long\def\A{\forall}

\global\long\def\o{\omega}
\global\long\def\G{\mathbb{G}}
\global\long\def\Linfty#1{L^{\infty}(#1)}
\global\long\def\Lone#1{L^{1}(#1)}
\global\long\def\Ad#1{\mathrm{Ad}(#1)}
\global\long\def\id{\mathrm{id}}
\global\long\def\one{\mathds{1}}

\title{A note on amenability of locally compact quantum groups}

\author{Piotr M.~So{\l}tan}

\address{Department of Mathematical Methods in Physics, Faculty of Physics,
University of Warsaw, Poland}

\email{piotr.soltan@fuw.edu.pl}

\author{Ami Viselter}

\address{Department of Mathematical and Statistical Sciences, University of
Alberta, Edmonton, Alberta T6G 2G1, Canada}

\email{viselter@ualberta.ca}
\thanks{The first author was partially supported by National Science Center (NCN) grant
no.~2011/01/B/ST1/05011. The second author was supported by NSERC Discovery Grant no.~418143-2012.}
\begin{abstract}
In this short note we introduce a notion called ``quantum injectivity''
of locally compact quantum groups, and prove that it is equivalent
to amenability of the dual. Particularly, this provides a new characterization
of amenability of locally compact groups.
\end{abstract}
\maketitle

\section*{Introduction}

In abstract harmonic analysis, the connection between amenability
of a locally compact group $G$ and injectivity of its group von Neumann
algebra $\VN(G)$ is well known: while the former always implies the
latter, the converse is generally not true (cf.~Connes \citep[Corollary 7]{Connes__classification_of_inj_factors});
it is, however, true for all discrete groups (and more generally,
all inner-amenable groups). See \citep{Paterson__book_amenability,Runde__book_amenability}
for full details. The notions of amenability and injectivity being
of fundamental importance, it is natural to ask whether the relations
between them carry to the framework of locally compact \emph{quantum}
groups (in the sense of Kustermans and Vaes). Partial answers have
been known for some time. Let $\G$ be a locally compact quantum group.
If $\G$ is amenable then $\Linfty{\hat{\G}}$ is an injective von
Neumann algebra (see Enock and Schwartz \citep{Enock_Schwartz__amenable_Kac_alg}
for Kac algebras, B{\'e}dos and Tuset \citep{Bedos_Tuset_2003} and
Doplicher, Longo, Roberts and Zsid{\'o} \citep{Doplicher_Longo_Roberts_Zsido__QG_action_nucl}
for the general case). Conversely, Ruan \citep{Ruan__amenability}
proved that if $\G$ is a discrete Kac algebra, then injectivity of
$\Linfty{\hat{\G}}$ entails amenability of $\G$. Nevertheless, it
is still an open question whether this holds for general discrete
quantum groups, not necessarily of Kac type.

While attempting to tackle this problem, we found a quantum analogue
of injectivity of von Neumann algebras, which we call ``quantum injectivity''.
Using the structure theory of completely bounded module maps, we prove
that quantum injectivity is equivalent to amenability of the dual
in \emph{all} locally compact quantum groups (not necessarily discrete
or of Kac type). Particularly, we obtain a new characterization of
amenability of locally compact groups. Whether this technique can
be used to solve the open question mentioned above is still yet to
be seen.

\section{preliminaries}

The theory of locally compact quantum groups is by now well established.
In this short note we only give the necessary definitions and facts,
and refer the reader to Kustermans and Vaes \citep{Kustermans_Vaes__LCQG_C_star,Kustermans_Vaes__LCQG_von_Neumann}
for full details.
\begin{defn}
A \emph{locally compact quantum group} (LCQG) in the von Neumann algebraic
setting is a pair $\G=(M,\Delta)$ such that:
\begin{enumerate}
\item $M$ is a von Neumann algebra
\item $\Delta:M\to M\tensor M$ is a co-multiplication, i.e., a unital normal
$*$-homomorphism which satisfies the co-associativity condition:
\[
(\Delta\tensor\id)\Delta=(\id\tensor\Delta)\Delta
\]

\item There exist normal, semi-finite and faithful weights $\varphi,\psi$,
called the left and right Haar weights, that are left and right invariant
(respectively) in the sense that:

\begin{enumerate}
\item $\varphi((\omega\tensor\id)\Delta(x))=\omega(\one)\varphi(x)$ for
all $\o\in M_{*}^{+}$ and $x\in M^{+}$ such that $\varphi(x)<\infty$
\item $\psi((\id\tensor\omega)\Delta(x))=\omega(\one)\psi(x)$ for all $\o\in M_{*}^{+}$
and $x\in M^{+}$ such that $\psi(x)<\infty$.
\end{enumerate}
\end{enumerate}
Following the standard convention, we use the notations $\Linfty{\G},\Lone{\G}$
and $L^{2}(\G)$ for $M,M_{*}$ and the Hilbert space obtained in
the GNS construction of $(M,\varphi)$, respectively. The canonical
injection $\mathcal{N}_{\varphi}\to L^{2}(\G)$ is denoted by $\Lambda_{\varphi}$.
\end{defn}
Every locally compact quantum group $\G$ admits a dual LCQG, denoted
by $\hat{\G}$. This duality extends the classical one for locally
compact abelian groups, and features a Pontryagin-like theorem. Importantly,
$\Linfty{\hat{\G}}$ can be realized canonically over $L^{2}(\G)$.
We say that $\G$ is \emph{compact} if $\varphi(\one)<\infty$ (see
Woronowicz \citep{Woronowicz__symetries_quantiques} for the original
definition, independent of the Kustermans--Vaes axioms), and \emph{discrete}
if $\hat{\G}$ is compact (cf.~\citep{Runde__charac_compact_discr_QG}).
One of the basic objects is the left regular co-representation: it
is the multiplicative unitary $W\in\Linfty{\G}\tensor\Linfty{\hat{\G}}$
given by
\begin{equation}
W^{*}(\Lambda_{\varphi}(a)\tensor\Lambda_{\varphi}(b))=\Lambda_{\varphi\tensor\varphi}(\Delta(b)(a\tensor\one))\qquad\text{for all }a,b\in\mathcal{N}_{\varphi}.\label{eq:left_reg_corep_def}
\end{equation}
It implements the co-multiplication as follows:
\begin{equation}
\Delta(x)=W^{*}(\one\tensor x)W\qquad\text{for all }x\in\Linfty{\G}.\label{eq:left_reg_corep_impl_co_mult}
\end{equation}
Similarly, there is the right regular co-representation, which is
a unitary $V\in\Linfty{\hat{\G}}'\tensor\Linfty{\G}$ satisfying $\Delta(x)=V(x\tensor\one)V^{*}$
for all $x\in\Linfty{\G}$.
\begin{defn}[\citep{Enock_Schwartz__amenable_Kac_alg,Desmedt_Quaegebeur_Vaes,Bedos_Tuset_2003}]
A LCQG $\G$ is called \emph{amenable} if it admits a left-invariant
mean, that is, a state $m\in\Linfty{\G}^{*}$ with $m((\o\tensor\id)\Delta(x))=\o(\one)m(x)$
for all $x\in\Linfty{\G}$, $\o\in\Lone{\G}$.
\end{defn}

\section{Quantum injectivity}

Our main result is the following.
\begin{thm}
\label{thm:quantum_inj}Let $\G$ be a LCQG. The following conditions
are equivalent:
\begin{enumerate}
\item \label{enu:quantum_inj__1}$\G$ is amenable
\item \label{enu:quantum_inj__2}there is a conditional expectation of $B(L^{2}(\G))$
onto $\Linfty{\hat{\G}}$ that maps $\Linfty{\G}$ to $\C\one$
\item \label{enu:quantum_inj__3}there is a conditional expectation of $B(L^{2}(\G))$
onto $\Linfty{\hat{\G}}$ that maps $\Linfty{\G}$ to $\mathrm{Center}(\Linfty{\hat{\G}})$.
\end{enumerate}
\end{thm}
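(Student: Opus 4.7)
The plan is to establish the cycle $(1)\Rightarrow(2)\Rightarrow(3)\Rightarrow(1)$. Since $\C\one\subseteq\mathrm{Center}(\Linfty{\hat{\G}})$, the implication $(2)\Rightarrow(3)$ is immediate.

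For $(1)\Rightarrow(2)$, given a left-invariant mean $m$ on $\Linfty{\G}$ I would set
\[
E(T):=(\id\tensor m)\bigl(V(T\tensor\one)V^{*}\bigr),\qquad T\in B(L^{2}(\G)),
\]
with $V\in\Linfty{\hat{\G}}'\tensor\Linfty{\G}$ the right regular co-representation. For $x\in\Linfty{\G}$, the identity $V(x\tensor\one)V^{*}=\Delta(x)$ and left-invariance of $m$ yield $E(x)=m(x)\one$, while for $y\in\Linfty{\hat{\G}}$ the commutation $[y\tensor\one,V]=0$ gives $E(y)=y$. To verify $E(B(L^{2}(\G)))\subseteq\Linfty{\hat{\G}}$, I would apply $\id\tensor\id\tensor m$ to the coaction identity $(\alpha\tensor\id)\alpha(T)=(\id\tensor\Delta)\alpha(T)$ for $\alpha(S):=V(S\tensor\one)V^{*}$: left-invariance collapses the right-hand side to $E(T)\tensor\one$ while the left-hand side equals $\alpha(E(T))$, so $E(T)$ is a fixed point of $\alpha$ and hence lies in $\Linfty{\hat{\G}}$ (the fixed-point algebra, by density of the right slices of $V$ in $\Linfty{\hat{\G}}'$). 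Tomiyama's theorem then recognises $E$ as a conditional expectation.

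For $(3)\Rightarrow(1)$, the main implication, I would fix any state $\psi$ on $\Linfty{\hat{\G}}$ and set $m:=\psi\circ E|_{\Linfty{\G}}$, a state on $\Linfty{\G}$. It suffices to prove the strengthened identity
\[
E\bigl((\omega\tensor\id)\Delta(x)\bigr)=\omega(\one)\,E(x)\qquad(x\in\Linfty{\G},\,\omega\in\Lone{\G}),
\]
whence left-invariance of $m$ follows on applying $\psi$. My plan is to invoke the structure theory of completely bounded $\Linfty{\hat{\G}}$-bimodule maps, alluded to in the introduction, in order to represent $E$ in the form $E(T)=\sum_{i}a_{i}^{*}Ta_{i}$ with $a_{i}\in\Linfty{\hat{\G}}'$. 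Since $W\in\Linfty{\G}\tensor\Linfty{\hat{\G}}$ has its second leg in $\Linfty{\hat{\G}}$, each $\one\tensor a_{i}$ commutes with $W$; using $\Delta(x)=W^{*}(\one\tensor x)W$ one therefore rewrites
\[
E\bigl((\omega\tensor\id)\Delta(x)\bigr)=\sum_{i}(\omega\tensor\id)\bigl(W^{*}(\one\tensor a_{i}^{*}xa_{i})W\bigr)=(\omega\tensor\id)\bigl(W^{*}(\one\tensor E(x))W\bigr),
\]
and the centrality hypothesis $E(x)\in\mathrm{Center}(\Linfty{\hat{\G}})$ yields $[\one\tensor E(x),W]=0$, whence $W^{*}(\one\tensor E(x))W=\one\tensor E(x)$ and slicing by $\omega$ returns $\omega(\one)\,E(x)$.

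The principal obstacle is making the above interchange rigorous: the representation of $E$ must converge in a topology compatible with the operation $(\omega\tensor\id)(W^{*}(\one\tensor\,\cdot\,)W)$, which is precisely what the structure theory for CB $\Linfty{\hat{\G}}$-bimodule maps is designed to deliver. Once this is in place, the identification of the fixed-point algebra of $\alpha$ required in $(1)\Rightarrow(2)$ is classical, and the whole cycle closes cleanly.
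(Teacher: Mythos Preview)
Your proposal is correct and follows essentially the same route as the paper: the same $V$-based averaging for $(1)\Rightarrow(2)$, and for $(3)\Rightarrow(1)$ the same reduction via the Effros--Kishimoto structure theory of completely bounded $\Linfty{\hat{\G}}$-bimodule maps to elementary operators with coefficients in $\Linfty{\hat{\G}}'$, which then commute past $W$. The only cosmetic difference is that the paper isolates the interchange $E\bigl((\o\tensor\id)\overline{\Delta}(x)\bigr)=(\o\tensor\id)\overline{\Delta}(E(x))$ as a separate lemma and works with general elementary operators $\sum_j \hat a_j' x \hat b_j'$ (approximating point--ultraweakly) rather than a single CP-type sum, which is exactly the rigor point you flag in your last paragraph.
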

We have recently found out that after we had discovered \prettyref{thm:quantum_inj},
Crann and Neufang \citep{Crann_Neufang__amn_inj_quest} proved a similar
result (from a different perspective), using essentially the same
methods.
\begin{defn}
\label{def:quantum_injective}Let $\G$ be a LCQG. We say that $\G$
is \emph{quantum injective} if there exists a conditional expectation
of $B(L^{2}(\G))$ onto $\Linfty{\G}$ that maps $\Linfty{\hat{\G}}$
to $\mathrm{Center}(\Linfty{\G})$.

Now \prettyref{thm:quantum_inj} simply says that $\G$ is amenable
$\Longleftrightarrow$ $\hat{\G}$ is quantum injective.\end{defn}
\begin{rem}
\label{rem:quantum_injective}A LCQG $\G$ is quantum injective if
and only if there exists a conditional expectation of $B(L^{2}(\G))$
onto either $\Linfty{\G}$ or $\Linfty{\G}'$ that maps either $\Linfty{\hat{\G}}$
or $\Linfty{\hat{\G}}'$ to $\mathrm{Center}(\Linfty{\G})$ (four
equivalent versions). This is a result of the relations $J\Linfty{\G}J=\Linfty{\G}'$,
$J\Linfty{\hat{\G}}J=\Linfty{\hat{\G}}$, $\hat{J}\Linfty{\hat{\G}}\hat{J}=\Linfty{\hat{\G}}'$
and $\hat{J}\Linfty{\G}\hat{J}=\Linfty{\G}$ (\citep{Kustermans_Vaes__LCQG_von_Neumann}).
\end{rem}
If $R$ is a von Neumann algebra over a Hilbert space $\H$, we let
$\mathcal{CB}_{R}(B(\H))$ stand for all completely bounded, $R$-module
maps over $B(\H)$. This space always contains the \emph{elementary
operators}, namely the ones of the form $x\mapsto\sum_{i=1}^{n}a_{i}'xb_{i}'$,
where the $a_{i}',b_{i}'$ belong to $R'$.

For a LCQG $\G$, the co-multiplication $\Delta$ extends to a normal
homomorphism $\overline{\Delta}:B(L^{2}(\G))\to\Linfty{\G}\tensor B(L^{2}(\G))$
given by $\overline{\Delta}(x):=W^{*}(\one\tensor x)W$ for $x\in B(L^{2}(\G))$.
\begin{thm}
\label{thm:cb_module_maps}Let $\G$ be a LCQG. If $E\in\mathcal{CB}_{\Linfty{\hat{\G}}}(B(L^{2}(\G)))$,
then
\[
(\o\tensor\id)\overline{\Delta}(E(x))=E\bigl((\o\tensor\id)\overline{\Delta}(x)\bigr)\qquad(\A x\in B(L^{2}(\G)),\o\in B(L^{2}(\G))_{*}).
\]

\end{thm}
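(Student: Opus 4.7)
The plan is to verify the identity first on the natural dense subclass of $\mathcal{CB}_{\Linfty{\hat{\G}}}(B(L^{2}(\G)))$, namely the elementary operators, and then transfer it to a general $E$ by an approximation argument supplied by the structure theory of completely bounded module maps.

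The key observation for elementary $E$ is a commutation fact about $W$. Since $W \in \Linfty{\G}\bar\tensor\Linfty{\hat{\G}}$, it lies in the commutant of $\Linfty{\G}'\bar\tensor\Linfty{\hat{\G}}'$; in particular both $W$ and $W^{*}$ commute with $\one\tensor c'$ for every $c'\in\Linfty{\hat{\G}}'$. So if $E_{0}(x)=\sum_{i=1}^{n}a_{i}'xb_{i}'$ with $a_{i}',b_{i}'\in\Linfty{\hat{\G}}'$, I compute
\[
\overline{\Delta}(E_{0}(x))=\sum_{i}W^{*}(\one\tensor a_{i}')(\one\tensor x)(\one\tensor b_{i}')W=\sum_{i}(\one\tensor a_{i}')\,\overline{\Delta}(x)\,(\one\tensor b_{i}').
\]
Applying $\o\tensor\id$ and pulling the operators $a_{i}',b_{i}'$ out of the second leg yields $(\o\tensor\id)\overline{\Delta}(E_{0}(x))=\sum_{i}a_{i}'(\o\tensor\id)\overline{\Delta}(x)b_{i}'=E_{0}((\o\tensor\id)\overline{\Delta}(x))$, so the identity holds on elementary operators.

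The next step is approximation. By the structure theory of completely bounded $R$-module maps on $B(\H)$ (results of Hofmeier--Wittstock, Smith, and Magajna), with $R=\Linfty{\hat{\G}}$ and $\H=L^{2}(\G)$, the elementary operators with coefficients in $R'$ form a point $\sigma$-weakly dense subset of $\mathcal{CB}_{\Linfty{\hat{\G}}}(B(L^{2}(\G)))$. I would thus choose a net $\{E_{\a}\}$ of such elementary operators with $E_{\a}(y)\to E(y)$ $\sigma$-weakly for every $y\in B(L^{2}(\G))$. Fix $x$ and $\o$. The right-hand side converges as a single evaluation: $E_{\a}\bigl((\o\tensor\id)\overline{\Delta}(x)\bigr)\to E\bigl((\o\tensor\id)\overline{\Delta}(x)\bigr)$. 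For the left-hand side, $E_{\a}(x)\to E(x)$ $\sigma$-weakly, and since $\overline{\Delta}$ is normal and slicing by $\o$ is $\sigma$-weakly continuous, $(\o\tensor\id)\overline{\Delta}(E_{\a}(x))\to(\o\tensor\id)\overline{\Delta}(E(x))$ $\sigma$-weakly. Combining with the identity for each $E_{\a}$ delivers the desired equality.

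The main obstacle is pinpointing and citing the correct density result: one needs density strong enough that applying the normal homomorphism $\overline{\Delta}$ followed by the slice $\o\tensor\id$ preserves the limit. Point $\sigma$-weak density suffices, but it is important to ensure the approximating net can be taken within $\mathcal{CB}_{\Linfty{\hat{\G}}}(B(L^{2}(\G)))$ with the module-map property intact; the cited structure theorems provide exactly this. The algebraic computation in the elementary case is routine once the commutation of $W$ with $\one\tensor\Linfty{\hat{\G}}'$ is noted.
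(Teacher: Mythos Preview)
Your proof is correct and follows essentially the same route as the paper's: verify the identity for elementary $\Linfty{\hat{\G}}'$-operators via the commutation of $W$ with $\one\tensor\Linfty{\hat{\G}}'$, then pass to a general $E$ using point--$\sigma$-weak approximation by elementary operators together with normality of $\overline{\Delta}$. The only cosmetic differences are that the paper cites Effros--Kishimoto for the density result and tests both sides against an auxiliary $\rho\in B(L^{2}(\G))_{*}$ rather than pulling the second-leg coefficients through the slice map directly.
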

The theorem would be an easy consequence of \eqref{eq:left_reg_corep_impl_co_mult}
if we knew that $E$ was \emph{normal}, but that is rarely true (cf.~\citep[\S V.2, Ex.~8(b)]{Takesaki__book_vol_1}).
\begin{proof}[Proof of \prettyref{thm:cb_module_maps}]
On account of the assumption that $E\in\mathcal{CB}_{\Linfty{\hat{\G}}}(B(L^{2}(\G)))$
there exists by \citep[Theorem 2.5]{Effros_Kishimoto_module_maps}
a net $\left(E_{i}\right)$ of elementary operators with coefficients
in $\Linfty{\hat{\G}}'$ that converges point--ultraweakly%
\footnote{Note that in $\mathcal{CB}_{\Linfty{\hat{\G}}}(B(L^{2}(\G)))$, $w^{*}$-convergence
implies point--ultraweak convergence by \citep[Lemma 2.4]{Effros_Kishimoto_module_maps}.%
} to $E$. Let $\rho,\omega\in B(L^{2}(\G))_{*}$ be given. Fix an
index $i$. Let $\hat{a}_{j}',\hat{b}_{j}'\in\Linfty{\hat{\G}}'$,
$j=1,\ldots,n$, be such that $E_{i}x=\sum_{j=1}^{n}\hat{a}_{j}'x\hat{b}_{j}'$
for all $x\in B(L^{2}(\G))$. Hence, for all $x\in B(L^{2}(\G))$,
\[
(\o\tensor\rho)\overline{\Delta}(E_{i}x)=(\o\tensor\rho)\bigl(\sum_{j=1}^{n}W^{*}(\one\tensor\hat{a}_{j}'x\hat{b}_{j}')W\bigr).
\]
Since $W\in\Linfty{\G}\tensor\Linfty{\hat{\G}}$, we obtain
\[
\begin{split}(\o\tensor\rho)\overline{\Delta}(E_{i}x) & =(\o\tensor\rho)\bigl(\sum_{j=1}^{n}(\one\tensor\hat{a}_{j}')W^{*}(\one\tensor x)W(\one\tensor\hat{b}_{j}')\bigr)\\
 & =(\o\tensor\rho)\bigl(\sum_{j=1}^{n}(\one\tensor\hat{a}_{j}')\overline{\Delta}(x)(\one\tensor\hat{b}_{j}')\bigr)\\
 & =(\rho\circ E_{i})\bigl((\o\tensor\id)\overline{\Delta}(x)\bigr).
\end{split}
\]
In conclusion,
\begin{equation}
(\o\tensor\rho)\overline{\Delta}(E_{i}x)=(\rho\circ E_{i})\bigl((\o\tensor\id)\overline{\Delta}(x)\bigr)\label{eq:E_i_calculations}
\end{equation}
for all $i$. Since $\overline{\Delta}$ is normal, the limit of the
left-hand side of \eqref{eq:E_i_calculations} with respect to $i$
is $(\o\tensor\rho)\overline{\Delta}(Ex)=\rho\bigl((\o\tensor\id)\overline{\Delta}(Ex)\bigr)$,
and that of the right-hand side is $(\rho\circ E)\bigl((\o\tensor\id)\overline{\Delta}(x)\bigr)$.
The foregoing being true for all $\rho\in B(L^{2}(\G))_{*}$, we are
done.
\end{proof}

\begin{proof}[Proof of \prettyref{thm:quantum_inj}]
\prettyref{enu:quantum_inj__3} $\implies$ \prettyref{enu:quantum_inj__1}.
Let $E$ be a conditional expectation of $B(L^{2}(\G))$ onto $\Linfty{\hat{\G}}$
that maps $\Linfty{\G}$ to $\mathrm{Center}(\Linfty{\hat{\G}})$.
Since $E$ is a completely positive $\Linfty{\hat{\G}}$-bimodule
map, it satisfies the conditions of \prettyref{thm:cb_module_maps}.
Fix a state $\rho\in\Linfty{\hat{\G}}^{*}$, and define $m\in\Linfty{\G}^{*}$
by $m:=\rho\circ E|_{\Linfty{\G}}$. So $m$ is a state since $E$
is a conditional expectation. Moreover, for all $x\in\Linfty{\G}$
and $\o\in\Lone{\G}$ we have from \prettyref{thm:cb_module_maps}:
\[
\begin{split}m\bigl((\o\tensor\id)\Delta(x)\bigr) & =\rho\circ E\bigl((\o\tensor\id)\Delta(x)\bigr)=\rho\bigl[(\o\tensor\id)\left(W^{*}(\one\tensor E(x))W\right)\bigr]\\
 & =\rho[(\o\tensor\id)(\one\tensor E(x))]=\o(\one)\rho(E(x))=\o(\one)m(x)
\end{split}
\]
 (because $W\in\Linfty{\G}\tensor\Linfty{\hat{\G}}$). Thus $m$ is
a left-invariant mean of $\G$, which is therefore amenable.

Evidently \prettyref{enu:quantum_inj__2} $\implies$ \prettyref{enu:quantum_inj__3}.
The implication \prettyref{enu:quantum_inj__1} $\implies$ \prettyref{enu:quantum_inj__2}
was established long ago in, e.g., \citep[Theorem 3.3]{Bedos_Tuset_2003}
(see \prettyref{rem:quantum_injective}), but without indicating that
$E(\Linfty{\G})=\C\one$. For completeness, we sketch the argument.
Suppose that $m$ is a left invariant mean on $\Linfty{\G}$, and
denote by $V$ the right regular co-representation of $\G$. For $x\in B(L^{2}(\G))$,
define $E(x)\in B(L^{2}(\G))$ by
\[
\omega(E(x))=m\bigl((\omega\tensor\id)(V(x\tensor\one)V^{*})\bigr)\qquad(\forall\omega\in B(L^{2}(\G))_{*}).
\]
Then $E:B(L^{2}(\G))\to B(L^{2}(\G))$ is clearly unital and positive.
If $x\in\Linfty{\hat{\G}}$, then as $m$ is a mean and $V\in\Linfty{\hat{\G}}'\tensor\Linfty{\G}$,
we have $\omega(E(x))=m\bigl((\omega\tensor\id)(x\tensor\one)\bigr)=\omega(x)$
for all $\omega\in B(L^{2}(\G))_{*}$, so that $E(x)=x$. We have
to show that the range of $E$ is precisely $\Linfty{\hat{\G}}=\Linfty{\hat{\G}}''=\left\{ y\in B(L^{2}(\G)):V(y\tensor\one)V^{*}=y\tensor\one\right\} $
(cf.~\citep{Kustermans_Vaes__LCQG_von_Neumann}, proof of Proposition
4.2). To this end, let $\omega,\rho\in B(L^{2}(\G))_{*}$ be given,
and define $\gamma\in B(L^{2}(\G))_{*}$ by $\gamma(x):=(\omega\tensor\rho)\left[V(x\tensor\one)V^{*}\right]$.
For $x\in B(L^{2}(\G))$ we obtain
\begin{equation}
(\omega\tensor\rho)\left[V(E(x)\tensor\one)V^{*}\right]=\gamma(E(x))=m\bigl((\gamma\tensor\id)(V(x\tensor\one)V^{*})\bigr).\label{eq:range_of_E_1}
\end{equation}
The identity $V_{12}V_{13}=(\id\tensor\Delta)(V)$ shows that
\begin{equation}
(\gamma\tensor\id)(V(x\tensor\one)V^{*})=(\rho\tensor\id)\Delta\left[(\omega\tensor\id)(V(x\tensor\one)V^{*})\right].\label{eq:range_of_E_2}
\end{equation}
By \eqref{eq:range_of_E_1}, \eqref{eq:range_of_E_2} and left invariance
of $m$ we have $(\omega\tensor\rho)\left[V(E(x)\tensor\one)V^{*}\right]=\rho(\one)\omega(E(x))$.
Since $\o$ and $\rho$ were arbitrary, we conclude that $V(E(x)\tensor\one)V^{*}=E(x)\tensor\one$
and hence $E(x)\in\Linfty{\hat{\G}}$ indeed. Finally, if $x\in\Linfty{\G}$,
then $V(x\tensor\one)V^{*}=\Delta(x)$, and the left invariance of
$m$ implies that $\omega(E(x))=m\bigl((\omega|_{\Linfty{\G}}\tensor\id)\Delta(x)\bigr)=\omega|_{\Linfty{\G}}(\one)m(x)=\omega(m(x)\one)$
for all $\omega\in B(L^{2}(\G))_{*}$, so that $E(x)=m(x)\one$.
\end{proof}
As a result, we have the following new characterization of amenability
of groups.
\begin{cor}
Let $G$ be a locally compact group. Then $G$ is amenable $\Longleftrightarrow$
there is a conditional expectation of $B(L^{2}(G))$ onto $\VN(G)$
mapping $\Linfty G$ to $\mathrm{Center}(\VN(G))$ (or to the scalars).
\end{cor}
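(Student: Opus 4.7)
The plan is to apply \prettyref{thm:quantum_inj} directly to the commutative LCQG attached to $G$. One takes $\G=(L^\infty(G),\Delta_G)$, where $\Delta_G\colon L^\infty(G)\to L^\infty(G)\tensor L^\infty(G)\cong L^\infty(G\times G)$ is the co-multiplication $\Delta_G(f)(s,t)=f(st)$, and $\varphi,\psi$ are integration against left and right Haar measure. In this case $\Linfty{\G}=L^\infty(G)$ acts by multiplication on $L^2(G)$, the GNS Hilbert space $L^2(\G)$ is canonically $L^2(G)$, and the dual quantum group satisfies $\Linfty{\hat{\G}}=\VN(G)$, realized standardly on the same $L^2(G)$. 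Amenability of the quantum group $\G$ in the sense of the definition above reduces to the existence of a state $m\in L^\infty(G)^*$ satisfying $m(L_sf)=m(f)$ for the translation action, which is Day's classical notion of amenability of $G$.

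Under these identifications, conditions \prettyref{enu:quantum_inj__2} and \prettyref{enu:quantum_inj__3} of \prettyref{thm:quantum_inj} become, respectively, the existence of a conditional expectation $B(L^2(G))\to\VN(G)$ mapping $L^\infty(G)$ into $\C\one$, and the existence of such a conditional expectation mapping $L^\infty(G)$ into $\mathrm{Center}(\VN(G))$. The equivalence of each of these with amenability of $G$ is exactly the content of the corollary, and it follows at once from \prettyref{thm:quantum_inj}.

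Thus there is really no new content to prove: the work is entirely in the dictionary between the classical and quantum formalisms. The only points that require checking are standard and well-documented in \citep{Kustermans_Vaes__LCQG_von_Neumann}, namely that for $\G=(L^\infty(G),\Delta_G)$ one recovers $\VN(G)$ as $\Linfty{\hat{\G}}$ on $L^2(G)$ (so that the formula $\overline{\Delta}(x)=W^*(\one\tensor x)W$ with the classical Kac--Takesaki operator $W$ is consistent with the usual conventions), and that quantum amenability collapses to classical amenability; there is no genuine obstacle.
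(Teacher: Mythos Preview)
Your proposal is correct and matches the paper's approach: the corollary is stated without proof in the paper, as an immediate specialization of \prettyref{thm:quantum_inj} to the commutative LCQG $\G=(L^\infty(G),\Delta_G)$, for which $\Linfty{\hat{\G}}=\VN(G)$ and quantum amenability coincides with classical amenability. Your write-up simply spells out this dictionary, which is exactly what is intended.
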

When $\G$ is a discrete quantum group of Kac type, Ruan \citep{Ruan__amenability}
proved that injectivity of $\Linfty{\hat{\G}}$ implies amenability
of $\G$ by proving directly that $\hat{\G}$ is co-amenable (in Ruan's
nomenclature: $\G$ is strongly {[}Voiculescu{]} amenable). Using
\prettyref{thm:cb_module_maps}, we give a short, direct proof of
this fact (compare \citep[Theorem 4.9]{Bedos_Murphy_Tuset}), in the
same spirit as the proof of the group case. Recall that the left regular
co-representation $\hat{W}$ of $\hat{\G}$ is equal to $\sigma(W)^{*}$,
where $\sigma$ is the flip map $x\tensor y\mapsto y\tensor x$ on
$B(L^{2}(\G))\tensor B(L^{2}(\G))$.
\begin{cor}
If $\G$ is a discrete quantum group of Kac type and $\Linfty{\hat{\G}}$
is injective, then $\G$ is amenable.\end{cor}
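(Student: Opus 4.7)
I would mimic the classical argument for discrete amenable groups. Injectivity of $\Linfty{\hat{\G}}$ furnishes a conditional expectation $E\colon B(L^{2}(\G))\to\Linfty{\hat{\G}}$. Since $\G$ is discrete of Kac type, $\hat{\G}$ is compact of Kac type, so its Haar state $\hat{\varphi}$ is a faithful, normal, \emph{tracial} state on $\Linfty{\hat{\G}}$. I would then set
\[
m:=\hat{\varphi}\circ E\bigr|_{\Linfty{\G}}
\]
and argue that $m$ is a left-invariant mean on $\Linfty{\G}$; by \prettyref{thm:quantum_inj} this gives amenability of $\G$.

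The state property of $m$ is clear from $E$ being unital and $\hat{\varphi}$ being a state. To check invariance, I would use that $E$ is a completely positive $\Linfty{\hat{\G}}$-bimodule map, hence lies in $\mathcal{CB}_{\Linfty{\hat{\G}}}(B(L^{2}(\G)))$, so \prettyref{thm:cb_module_maps} applies. For $x\in\Linfty{\G}$ and $\omega\in\Lone{\G}$, the identity $\Delta(x)=\overline{\Delta}(x)$ together with \prettyref{thm:cb_module_maps} yields
\[
m\bigl((\omega\tensor\id)\Delta(x)\bigr)=\hat{\varphi}\bigl((\omega\tensor\id)\overline{\Delta}(E(x))\bigr)=(\omega\tensor\hat{\varphi})\bigl(W^{*}(\one\tensor E(x))W\bigr).
\]
Setting $y:=E(x)\in\Linfty{\hat{\G}}$, the desired equality $m((\omega\tensor\id)\Delta(x))=\omega(\one)m(x)$ reduces to the identity
\[
(\id\tensor\hat{\varphi})\bigl(W^{*}(\one\tensor y)W\bigr)=\hat{\varphi}(y)\one\qquad(y\in\Linfty{\hat{\G}}). \qquad(\ast)
\]

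The hard part will be establishing $(\ast)$, a form of invariance of the tracial Haar state under the $\G$-action $y\mapsto\overline{\Delta}(y)=W^{*}(\one\tensor y)W$ on $\Linfty{\hat{\G}}$. Its content really does rest on the tracial property of $\hat{\varphi}$: when $\G$ is a genuine discrete group, $W=\sum_{g}e_{g}\tensor\lambda_{g}$ gives $W^{*}(\one\tensor y)W=\sum_{g}e_{g}\tensor\lambda_{g^{-1}}y\lambda_{g}$, and the tracial relation $\hat{\varphi}(\lambda_{g^{-1}}y\lambda_{g})=\hat{\varphi}(y)$ yields $(\ast)$ on the nose. In the general compact Kac case I would deduce $(\ast)$ from the two-sided invariance of $\hat{\varphi}$ on $\hat{\G}$ (available since $\hat{\varphi}$ coincides with the right Haar state $\hat{\psi}$ when it is tracial) combined with the standard Kustermans--Vaes relations $\sigma(\hat{W})=W^{*}$ and $(R\tensor\hat{R})(W)=W$ linking $W$ to the multiplicative unitary and the unitary antipodes of $\hat{\G}$. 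Once $(\ast)$ is in place, the invariance computation $m((\omega\tensor\id)\Delta(x))=\omega(\one)\hat{\varphi}(y)=\omega(\one)m(x)$ concludes the proof.
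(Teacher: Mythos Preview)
Your overall strategy is exactly the paper's: take a conditional expectation $E$ onto $\Linfty{\hat{\G}}$, set $m=\hat{\varphi}\circ E|_{\Linfty{\G}}$, invoke \prettyref{thm:cb_module_maps}, and reduce left invariance of $m$ to the identity $(\ast)$, i.e.\ invariance of the tracial Haar state under the adjoint action $y\mapsto W^{*}(\one\tensor y)W$ on $\Linfty{\hat{\G}}$. Up to this point the two arguments coincide.

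The difference is in how $(\ast)$ is established. The paper does not appeal to the antipode relation $(R\tensor\hat{R})(W)=W$ or to abstract bi-invariance; instead it tests $(\ast)$ against the dense family of vector functionals $\omega=\omega_{\Lambda_{\hat{\varphi}}(\hat a),\Lambda_{\hat{\varphi}}(\hat b)}$. Using $\sigma(\hat W)=W^{*}$ and the defining formula $\hat{W}^{*}(\Lambda_{\hat{\varphi}}(\one)\tensor\Lambda_{\hat{\varphi}}(\hat a))=\Lambda_{\hat{\varphi}\tensor\hat{\varphi}}(\hat{\Delta}(\hat a))$, one gets $(\hat{\varphi}\tensor\hat{\varphi})\bigl(\hat{\Delta}(\hat b^{*})(y\tensor\one)\hat{\Delta}(\hat a)\bigr)$; traciality then moves $\hat{\Delta}(\hat b^{*})$ to the right, producing $(\hat{\varphi}\tensor\hat{\varphi})\bigl((y\tensor\one)\hat{\Delta}(\hat a\hat b^{*})\bigr)=\hat{\varphi}(y)\hat{\varphi}(\hat a\hat b^{*})=\hat{\varphi}(y)\omega(\one)$ by left invariance.

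Your sketch of $(\ast)$ is the one place that is not yet a proof. The relation $\sigma(\hat W)=W^{*}$ only converts $(\ast)$ into $(\hat{\varphi}\tensor\id)\bigl(\hat W(y\tensor\one)\hat W^{*}\bigr)=\hat{\varphi}(y)\one$, which is \emph{not} an instance of left or right Haar invariance (those involve $\hat W^{*}(\one\tensor y)\hat W$ or $\hat V(y\tensor\one)\hat V^{*}$), and the antipode identity $(R\tensor\hat R)(W)=W$ merely relates $(\ast)$ to the analogous statement with $W$ and $W^{*}$ interchanged rather than proving either. So the ingredients you list do not combine without an additional step; the missing piece is precisely the GNS/traciality computation above (or a citation of the known fact that the Haar trace on a compact Kac algebra is invariant under the adjoint action of the dual). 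Once you insert that, your argument is complete and matches the paper's.
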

\begin{proof}
Since $\hat{\G}$ is compact and of Kac type, its Haar state $\hat{\varphi}$
is a trace. Let $E$ be a conditional expectation from $B(L^{2}(\G))$
onto $\Linfty{\hat{\G}}$. Define $m:=\hat{\varphi}\circ E|_{\Linfty{\G}}$.
If $x\in\Linfty{\G}$ and $\hat{a},\hat{b}\in\Linfty{\hat{\G}}$,
then for $\o:=\o_{\Lambda_{\hat{\varphi}}(\hat{a}),\Lambda_{\hat{\varphi}}(\hat{b})}$
we have from \prettyref{thm:cb_module_maps} and the dual version
of \eqref{eq:left_reg_corep_def}:
\[
\begin{split}m\left((\o\tensor\id)\Delta(x)\right) & =(\o\tensor\hat{\varphi})\overline{\Delta}(E(x))=(\hat{\varphi}\tensor\o)\bigl(\hat{W}(E(x)\tensor\one)\hat{W}^{*}\bigr)\\
 & =\left\langle \hat{W}(E(x)\tensor\one)\hat{W}^{*}(\Lambda_{\hat{\varphi}}(\one)\tensor\Lambda_{\hat{\varphi}}(\hat{a})),\Lambda_{\hat{\varphi}}(\one)\tensor\Lambda_{\hat{\varphi}}(\hat{b})\right\rangle \\
 & =\left\langle (E(x)\tensor\one)\hat{W}^{*}(\Lambda_{\hat{\varphi}}(\one)\tensor\Lambda_{\hat{\varphi}}(\hat{a})),\hat{W}^{*}(\Lambda_{\hat{\varphi}}(\one)\tensor\Lambda_{\hat{\varphi}}(\hat{b}))\right\rangle \\
 & =(\hat{\varphi}\tensor\hat{\varphi})\bigl(\hat{\Delta}(\hat{b}^{*})(E(x)\tensor\one)\hat{\Delta}(\hat{a})\bigr).
\end{split}
\]
By the traciality and invariance of $\hat{\varphi}$ we deduce that
\[
m\left((\o\tensor\id)\Delta(x)\right)=(\hat{\varphi}\tensor\hat{\varphi})\bigl((E(x)\tensor\one)\hat{\Delta}(\hat{a}\hat{b}^{*})\bigr)=\hat{\varphi}(E(x))\hat{\varphi}(\hat{a}\hat{b}^{*})=\o(\one)m(x).
\]
Therefore $m$ is a left invariant mean, and $\G$ is amenable.
\end{proof}
We conclude with the original open question that has been the motivation
for this note. Since a compact quantum group is of Kac type if and
only if its underlying von Neumann algebra is finite (see \citep[Remark A.2]{Soltan__quantum_Bohr_comp};
also compare Fima \citep[Theorem 8]{Fima__LCQG_factors}), this question
is of interest only when $L^{\infty}(\hat{\G})$ is \emph{infinite}
by the last corollary.
\begin{conjecture*}
If $\G$ is a discrete quantum group such that $L^{\infty}(\hat{\G})$
is injective, then $\G$ is amenable (and hence $\hat{\G}$ is co-amenable
by Tomatsu \citep{Tomatsu__amenable_discrete}).
\end{conjecture*}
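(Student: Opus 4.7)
My plan is to follow the skeleton of the Kac-type argument of the preceding corollary, isolate the precise point at which traciality was used, and then either push it through directly or modify the conditional expectation so as to meet condition~(3) of \prettyref{thm:quantum_inj}. Assume $\Linfty{\hat{\G}}$ is injective and fix a conditional expectation $E:B(L^{2}(\G))\to\Linfty{\hat{\G}}$; since $\hat{\G}$ is compact, the Haar weight is a state $\hat{\varphi}$, and we set $m:=\hat{\varphi}\circ E|_{\Linfty{\G}}$. By \prettyref{thm:cb_module_maps} applied to the completely positive $\Linfty{\hat{\G}}$-bimodule map $E$,
\[
m\bigl((\o\tensor\id)\Delta(x)\bigr)=(\o\tensor\hat{\varphi})\overline{\Delta}(E(x))\qquad(x\in\Linfty{\G},\ \o\in\Lone{\G}),
\]
so amenability of $\G$ reduces to the identity $(\o\tensor\hat{\varphi})\overline{\Delta}(y)=\o(\one)\hat{\varphi}(y)$ for every $y$ in $E(\Linfty{\G})\subseteq\Linfty{\hat{\G}}$.

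The first step is to attempt the Kac-case calculation verbatim: expand $\overline{\Delta}(y)=W^{*}(\one\tensor y)W$, use $W\in\Linfty{\G}\tensor\Linfty{\hat{\G}}$ together with the dual version of \eqref{eq:left_reg_corep_def} to rewrite the quantity as a matrix coefficient of the shape $(\hat{\varphi}\tensor\hat{\varphi})\bigl(\hat{\Delta}(\hat{b}^{*})(y\tensor\one)\hat{\Delta}(\hat{a})\bigr)$ for suitable $\hat{a},\hat{b}\in\Linfty{\hat{\G}}$, and then close by left invariance of $\hat{\varphi}$. The decisive step in the Kac proof was the rearrangement of $\hat{\Delta}(\hat{a})$ past $\hat{\Delta}(\hat{b}^{*})$ via traciality; in the non-Kac setting the modular group $\sigma^{\hat{\varphi}}$ is nontrivial and the resulting KMS twist does not cancel, because $y=E(x)$ need not be $\sigma^{\hat{\varphi}}$-entire.

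If the direct calculation fails, the natural remedy is to alter either $m$ or $E$ so as to absorb the modular twist. Two concrete approaches suggest themselves. First, one may replace $E$ by an appropriate average $\tilde{E}$ over the scaling group $\tau^{\hat{\G}}$, which preserves $\hat{\varphi}$ and intertwines $\hat{\Delta}$ covariantly, hoping to land $\tilde{E}(\Linfty{\G})$ in the fixed algebra of $\sigma^{\hat{\varphi}}$. Second, one may exploit the block decomposition $\Linfty{\hat{\G}}\cong\prod_{\pi}B(H_{\pi})$ over the irreducible corepresentations of $\hat{\G}$, together with the Woronowicz matrices $F_{\pi}$, to analyze $E|_{\Linfty{\G}}$ summand by summand and aim directly for $E(\Linfty{\G})\subseteq\mathrm{Center}(\Linfty{\hat{\G}})$, which by \prettyref{thm:quantum_inj} already suffices.

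The chief obstacle is the mismatch between the symmetric invariance $(\o\tensor\hat{\varphi})\overline{\Delta}(y)=\o(\one)\hat{\varphi}(y)$ demanded on the image of $E$ and the asymmetric KMS twist carried by $\hat{\varphi}$ in the non-Kac regime. Overcoming it appears to demand either a rigidity statement forcing $E(\Linfty{\G})$ to lie in the centralizer of $\hat{\varphi}$, or a genuinely new invariance mechanism for composites $\hat{\varphi}\circ E$ with a merely completely positive (not normal) $E$, beyond what \prettyref{thm:cb_module_maps} supplies. No purely formal argument along these lines yet succeeds, which is precisely why the conjecture remains open past the Kac case.
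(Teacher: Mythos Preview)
The statement you are trying to prove is not a theorem in the paper but its concluding \emph{conjecture}; the paper offers no proof whatsoever. Indeed, the authors introduce it with ``We conclude with the original open question that has been the motivation for this note,'' and the preceding discussion explains that the question is only interesting when $\Linfty{\hat{\G}}$ is infinite, i.e., precisely outside the Kac case already handled. So there is no ``paper's own proof'' to compare against.

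Your proposal is consistent with this: it is not a proof but a diagnosis. You correctly isolate the point where the Kac argument breaks (the rearrangement $\hat{\Delta}(\hat{b}^{*})(y\tensor\one)\hat{\Delta}(\hat{a})\to(y\tensor\one)\hat{\Delta}(\hat{a}\hat{b}^{*})$ uses traciality of $\hat{\varphi}$), and you honestly acknowledge in the final paragraph that neither of your two suggested repairs---averaging over the scaling group, or a blockwise analysis via the Woronowicz $F$-matrices---can currently be pushed through. That is an accurate assessment of the state of affairs, but it means your write-up should be labelled as a discussion of obstructions, not as a proof or proof proposal. As it stands, the header ``Proof proposal'' is misleading: nothing is proved, and the closing sentence concedes as much.
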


\section*{Acknowledgments}

We thank M. Kalantar, P. Kasprzak, N. Ozawa, D. Poulin, V. Runde and
R. Tomatsu for interesting discussions about the subject of this note.
The second author is grateful to the team of the Department of Mathematical
Methods in Physics of the University of Warsaw for their hospitality
during his visit there.

\bibliographystyle{amsplain}
\bibliography{Injectivity}

\end{document}